\documentclass[11pt,epsfig]{article}
\usepackage{amsmath}
\usepackage{amsthm}
\usepackage{epsfig}
\usepackage{graphicx}
\usepackage{hyperref}
\usepackage{amsfonts}
\usepackage{graphicx, color, epstopdf}

\newtheoremstyle{thmm}{1.5ex plus 1ex minus .2ex}{1.5ex plus 1ex minus .2ex}{\rmfamily}{}{\bfseries}{}{1em}{}
\theoremstyle{thmm}
\newtheorem{theorem}{Theorem}[section]
\newtheorem{lemma}{Lemma}[section]

\newtheorem{remark}{Remark}
\renewenvironment{proof}[1][Proof]{\noindent\textit{#1. } }{\hfill$\square$}
\setlength{\textwidth}{160mm} \setlength{\textheight}{22cm}
\setlength{\headheight}{0cm} \setlength{\topmargin}{-1.0cm}
\setlength{\oddsidemargin}{0.0cm}
\setlength{\evensidemargin}{2.0cm}

\def \endproof{\vrule height8pt width 5pt depth 0pt}
\def\refe#1{(\ref{#1})}

\def\R{\mathbb{R}}

\title{\bf Analysis of $L1$-Galerkin FEMs for time-fractional
nonlinear parabolic problems}
\author{Dongfang Li\thanks{School of Mathematics and Statistics,
Huazhong University of Science and Technology, Wuhan 430074, China
({\tt dfli@hust.edu.cn}).
; and Department of Mathematics, City University of Hong Kong, Kowloon,
Hong Kong
}
\and Honglin Liao\thanks{Institute of Sciences,
PLA University of Science and Technology, Nanjing, 211101, China
({\tt liaohl2003@sina.com}).}
\and Weiwei Sun\thanks{Department of Mathematics,
City University of Hong Kong, Kowloon, Hong Kong
({\tt maweiw@math.cityu.edu.hk}).}
\and Jilu Wang\thanks{Department of Scientific Computing,
Florida State University, Tallahassee, FL 32306, USA
({\tt jwang13@fsu.edu}).}
\and Jiwei Zhang\thanks{Beijing Computational Science Research Center,
Beijing  10084, China
({\tt jwzhang@csrc.ac.cn}).}}
\date{}
\begin{document}

\maketitle

\begin{abstract}
This paper is concerned with numerical solutions of
time-fractional nonlinear parabolic problems by a class of $L1$-Galerkin finite element methods.
The analysis of $L1$ methods for time-fractional nonlinear problems
is limited
mainly due to the lack of a fundamental Gronwall type inequality.
In this paper, we establish such a fundamental inequality for
the $L1$ approximation to the
Caputo fractional derivative. In terms of
the Gronwall type inequality, we provide optimal error estimates of
several fully discrete linearized Galerkin finite element methods
for nonlinear problems.
The theoretical results are illustrated by applying our proposed methods
to three examples: linear Fokker-Planck equation, nonlinear
Huxley equation and Fisher equation.

\vskip 5pt \noindent {{\bf Keywords:}  time-fractional nonlinear parabolic problems,
 $L1$-Galerkin FEMs,  error estimates, Gronwall type inequality, linearized schemes}


\end{abstract}

\section{Introduction}\label{intro}
\setcounter{equation}{0}
In this paper, we study numerical solutions of
the time-fractional nonlinear parabolic equation
\begin{align}
~_0^C\!\mathcal{D}_t^\alpha u-\Delta u=f(u, x,t),
\quad
x\in\Omega\times(0,T]
\label{i1}
\end{align}
with the initial and boundary conditions, given by
\begin{align}
\begin{array}{ll}
u(x,0)=u_0(x), & x\in\Omega, \\
u(x,t)=0, & x\in\partial\Omega\times[0,T],
\end{array}
\label{i1-ib}
\end{align}
where $\Omega\subset \R^d$ ($d=1,~2$ or $3$) is a bounded
and convex polygon.
The Caputo fractional
derivative $ ~_0^C\!\mathcal{D}_t^\alpha $ is defined as
\begin{equation} \label{defin-1}
 ~_0^C\!\mathcal{D}_t^\alpha u(x,t)
=\frac{1}{\Gamma(1-\alpha)}\int_0^t\frac{\partial
u(x,s)}{\partial s}\frac{1}{(t-s)^\alpha}ds, \quad 0<\alpha <1.
\end{equation}
Here $\Gamma(\cdot)$ denotes the usual gamma function.

The model \eqref{i1} is used to
describe plenty of nature phenomena in physics, biology and chemistry
 \cite{hil00,kil06,mag06,pod99}. In the past decades,
developing effective numerical methods and
rigorous numerical analysis for the time-fractional PDEs have been
a hot research spot \cite{che-liu12,gao15,lan05,mcl09,WH1,WH2,yus05,zeng-li13,zhu09}.
Numerical methods can be roughly divided into two categories:
indirect and direct methods.
The former is based on the solution of an integro-differential equation
by some proper numerical schemes since time-fractional differential equations
can be reformulated into integro-differential equations in general,
while the latter is based on a direct
(such as piecewise polynomial) approximation to the time-fractional
derivative \cite{cao-xu13,cao15,li-tao09,li-chen16}.

Direct methods are more popular in practical computations
due to its ease of implementation.
One of the most commonly used direct methods is
the so-called $L1$-scheme, which can be viewed as a piecewise
linear approximation to the fractional derivative \cite{old74} and
which has been widely applied for solving various time-fractional
PDEs \cite{gao11,jia11}.
However, numerical analysis for direct methods is limited, even
for a simple linear model \refe{i1} with
\begin{align}\label{1.4}
f(u)=L_0 u,~~~~~~t\in (0,T].
\end{align}
The analysis of $L1$-type methods for the linear model
was studied by several authors, while the convergence
and error estimates were obtained under the assumption that
\begin{align}
L_0 \le 0
\label{L-1}
\end{align}
in general, see \cite{jin15,jin-la16,lin07,mart16,sun06}.
The proof there cannot be directly extended to the case of $L_0>0$.
Recently, the condition \refe{L-1}
was improved in \cite{yu-deng15},
in which a time-fractional nonlinear predator-prey model was studied
by an $L1$ finite difference scheme and $f(u)$ was assumed to satisfy
a global Lipschitz condition.
The stability and convergence were proved under the assumption
\begin{align}
T^\alpha< \frac{1}{L \Gamma(1-\alpha)}.
\label{deng}
\end{align}
Here $L$ denotes the Lipschitz constant. The restriction condition \eqref{deng} implies that the scheme is convergent and stable
only locally in time. Similar assumptions appeared in the analysis
of $L1$ type schemes for time-fractional Burger equation
\cite{li-zhang-ran16} and nonlinear Fisher equation
\cite{li-zhang16}, respectively, where $L$ may depend upon an upper bound
of numerical solutions.
In both \cite{li-zhang16} and \cite{li-zhang-ran16}, a classical
finite difference approximation was used for spatial discretization.
Several linearized $L1$ schemes with other approximations
in spatial direction, such as spectral methods
\cite{bhr15,bhr151} and meshless methods \cite{moh13}, were also
investigated numerically for time-fractional nonlinear differential
equations. No analysis was explored there.

It is well known that the classical Gronwall inequality plays
an important role in analysis of parabolic PDEs
($\alpha =1$) and the analysis of corresponding
numerical methods also relies heavily on the discrete counterpart of
the inequality. Clearly, the analysis of
$L1$-type numerical methods for time-fractional nonlinear differential
equations ($0 < \alpha < 1$) has not been well done
mainly due to the lack of such a fundamental inequality.

The aim of this paper is to present the numerical analysis
for several fully discrete $L1$ Galerkin FEMs for the general nonlinear
equation \refe{i1} with any given $T>0$.
The key to our analysis is to establish
a new Gronwall type inequality for a positive
sequence satisfying
\begin{align}
D_{\tau}^{\alpha} \omega^n \le \lambda_1 \omega^n +\lambda_2 \omega^{n-1}+ g^n,
\end{align}
where $D_{\tau}^{\alpha}$ denotes an $L1$ approximation to
$~_0^C\!\mathcal{D}_t^\alpha$,  $\lambda_1$ and $\lambda_2$
are both positive constants. In terms of the fundamental inequality,
we present optimal error estimates of proposed
fully discrete $L1$-Galerkin FEMs for
equation \refe{i1} with linear or nonlinear source $f(u)$.
Moreover, our analysis can be extended to many other direct numerical methods
for time-fractional parabolic equations.

The rest of the paper is organized as follows. We
present three linearized fully discrete numerical schemes
and the main convergence results in Section \ref{sec2}. These schemes are based on
an $L1$ approximation in temporal direction and
Galerkin FEMs in spatial direction.  In Section \ref{sec3},
a new Gronwall type inequality is established for the $L1$ approximation
and optimal error estimates
of the proposed numerical methods are proved.
In Section \ref{sec4},
we present numerical experiments on three different models,
linear fractional Fokker-Planck equation and nonlinear fractional
Huley equation and Fisher equation. Numerical examples are provided to confirm our
theoretical analysis.
Finally, conclusions and
discussions are summarized in Section \ref{sec5}.

\section{ L1-Galerkin FEMs and main results}\label{sec2}
\setcounter{equation}{0}
We first introduce some notations and present several fully discrete
numerical schemes.

For any integer $m\geq 0$ and $1\leq p\leq \infty$,
let $W^{m,p}$ be the usual Sobolev space of functions
defined in $\Omega$ equipped with the norm
$\|\cdot\|_{W^{m,p}}$.
If $p=2$, we denote $W^{m,2}(\Omega)$ by $H^m(\Omega)$.
Let $\mathcal{T}_h$ be a quasiuniform
partition of $\Omega$ into
intervals $T_i$ ($i=1,\cdots,M$) in $\mathbb{R}^1$,
or triangles
in $\mathbb{R}^2$ or tetrahedra in $\mathbb{R}^3$,
 $h=\max_{1\leq i \leq M}\{\textrm{diam}\;T_i\}$
be the mesh size. Let $V_h$ be the finite-dimensional subspace
of $H_0^1(\Omega)$, which consists of continuous
piecewise polynomials of degree $r$ ($r\geq 1$) on $\mathcal{T}_h$.
Let $\mathcal{T}_\tau = \{t_n | t_n=n\tau;0\leq n\leq N \}$ be a uniform
partition of $[0,T]$ with the time step $\tau=T/N$.

Based on a piecewise linear interpolation, the $L1$-approximation (scheme) to the Caputo fractional derivative is given by
\begin{eqnarray} \label{l1sch}
~_0^C\!\mathcal{D}_{t_n}^\alpha u
&=&
\frac{1}{\Gamma(1\!-\!\alpha)}
\int_0^{t_n}\frac{u'(x,s)}{(t_n\!-\!s)^\alpha}ds
\nonumber\\
&=&
\frac{1}{\Gamma(1-\alpha)}\sum_{j=1}^{n}\frac{u(x,t_j)-u(x,t_{j-1})}{\tau}
\int_{t_{j-1}}^{t_{j}}\frac{1}{(t_n-s)^\alpha}ds+ Q^n\nonumber\\
&=&
\frac{\tau^{-\alpha}}{\Gamma(2-\alpha)}\sum_{j=1}^na_{n-j}(u(x,t_j)-u(x,t_{j-1}))
+ Q^n,\nonumber
\end{eqnarray}
where
\begin{align}
a_i=(i+1)^{1-\alpha} - i^{1-\alpha},\quad i\geq 0.
\label{a}
\end{align}
If $u \in
C^2([0,T];L^2(\Omega))$, the truncation error $Q^n$ satisfies
\cite{lin07,sun06}
\begin{eqnarray}\label{2.1}
\|Q^n\|_{L^2} \leq C \tau^{2-\alpha}.
\label{Qn}
\end{eqnarray}
If  $u$ does not have the requisite
regularity, the truncation error $Q^n$ may have some possible
loss of accuracy. We will discuss it later.

For a sequence of functions $\{\omega^n\}_{n=0}^N$, we define
\begin{align}
D_{\tau}^{\alpha}\omega^n
:= \frac{\tau^{-\alpha}}{\Gamma(2-\alpha)}
\sum_{j=1}^na_{n-j}{\delta_t\omega^j}
=  \frac{\tau^{-\alpha}}{\Gamma(2-\alpha)}
\sum_{j=0}^n
 b_{n-j} \omega^j, \quad n=1,\cdots,N,
\label{l1}
\end{align}
where $\delta_t\omega^n=\omega^n-\omega^{n-1}$ and
\begin{align}
\label{add-l1}
b_0 = a_0,\ b_n = -a_{n-1},\ b_{n-j} = a_{n-j} - a_{n-j-1}, \;\;
j = 1,\cdots, n-1.
\end{align}

With above notations, a linearized $L1$-Galerkin FEM is: to seek
$U_h^n\in V_h$ such that
\begin{align} \label{LFEMS}
\left(D_{\tau}^{\alpha}U_h^n, v_h \right)+\left(\nabla  U_h^{n },\nabla v_h \right) =
\left(f\left(U_h^{n-1}\right),v_h \right),   \quad \forall v_h\in V_h, ~~n=1,2,\cdots,N
\end{align}
with $U_h^0=\Pi_hu_0$, where $\Pi_h$ represents the interpolation operator.

By noting \refe{add-l1}, we can rewrite the scheme \eqref{LFEMS} equivalently as
\begin{align}
\frac{\tau^{-\alpha}}{\Gamma(2-\alpha)}
\sum _{j=0}^n b_{n-j} \left (U_h^j, v_h \right )
+ \left(\nabla  U_h^{n },\nabla v_h \right)
= \left(f(U_h^{n-1}), v_h \right), \quad \forall v_h\in V_h.
\label{s1}
\end{align}

In this paper, we assume that the function $f:\mathbb{R}\rightarrow\mathbb{R}$
is Lipschitz continuous, i.e.
\begin{align}
|f(\xi_1)-f(\xi_2)|\leq L|\xi_1-\xi_2|,~~ \quad \textrm{for }\quad \xi_1,\xi_2\in\mathbb{R},
\label{LC}
\end{align}
where $L$ denotes the Lipschitz coefficient. We present optimal
error estimates
of scheme \eqref{s1}
in the following theorem and leave the proof to section \ref{TH2.1Proof}.

\begin{theorem} \label{main}
Suppose that the system \eqref{i1}-\eqref{i1-ib} has a unique solution
$u \in
C^2([0,T];L^2(\Omega))\cap C^1([0,T];H^{r+1}(\Omega))$. Then, there exists a positive constant $\tau_0$,
such that when $\tau\leq \tau_0$,
the finite element system
\eqref{s1}
admits a unique solution $U_h^n$,
$n=1,2,\cdots,N$, satisfying
\begin{eqnarray}
\|u^{n}-U_h^n\|_{L^2} \leq C_0(\tau+h^{r+1}),
\label{error-1}
\end{eqnarray}
where $u^n=u(x,t_n)$ and $C_0$ is a positive constant independent of $\tau$ and $h$.
\end{theorem}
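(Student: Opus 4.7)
The plan is to split the error via the Ritz projection $R_h$ associated with the bilinear form $(\nabla\cdot,\nabla\cdot)$ and then reduce the convergence proof to a discrete fractional Gronwall argument based on the inequality to be established in Section~\ref{sec3}. Write $u^n-U_h^n=\rho^n+\theta^n$ with $\rho^n:=u^n-R_hu^n$ and $\theta^n:=R_hu^n-U_h^n$; under the regularity hypothesis $u\in C^1([0,T];H^{r+1})$, standard finite-element estimates give $\|\rho^n\|_{L^2}\le Ch^{r+1}$ and $\|\delta_t\rho^j\|_{L^2}\le C\tau h^{r+1}$. Existence and uniqueness of $U_h^n$ at every time level is immediate because the coefficient matrix $\tfrac{\tau^{-\alpha}a_0}{\Gamma(2-\alpha)}M+K$, with $M,K$ the mass and stiffness matrices, is symmetric positive definite.

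Next, I would insert the exact solution into the scheme, invoke the truncation error bound \eqref{Qn}, and subtract \eqref{s1} together with the Galerkin orthogonality of $R_h$ to obtain
\begin{equation*}
(D_{\tau}^{\alpha}\theta^n,v_h)+(\nabla\theta^n,\nabla v_h)=\bigl(f(u^n)-f(U_h^{n-1}),v_h\bigr)-(Q^n,v_h)-(D_{\tau}^{\alpha}\rho^n,v_h)
\end{equation*}
for every $v_h\in V_h$. Choosing $v_h=\theta^n$ and invoking the well-known fractional positivity property $(D_{\tau}^{\alpha}\theta^n,\theta^n)\ge\tfrac12 D_{\tau}^{\alpha}\|\theta^n\|_{L^2}^2$, I would then use the Lipschitz bound \eqref{LC} applied to $u^n-U_h^{n-1}=(u^n-u^{n-1})+\rho^{n-1}+\theta^{n-1}$, together with Cauchy--Schwarz and Young's inequality, to produce an estimate of the form
\begin{equation*}
D_{\tau}^{\alpha}\|\theta^n\|_{L^2}^2\le\lambda_1\|\theta^n\|_{L^2}^2+\lambda_2\|\theta^{n-1}\|_{L^2}^2+g^n,
\end{equation*}
where $g^n\le C(\tau+h^{r+1})^2$. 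Feeding this recurrence into the fractional Gronwall inequality promised in Section~\ref{sec3} then yields $\|\theta^n\|_{L^2}\le C(\tau+h^{r+1})$, and a final triangle inequality delivers \eqref{error-1}.

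Two technical points will require care. First, the uniform bound $\|D_{\tau}^{\alpha}\rho^n\|_{L^2}\le Ch^{r+1}$ is obtained by writing $D_{\tau}^{\alpha}\rho^n=\tfrac{\tau^{-\alpha}}{\Gamma(2-\alpha)}\sum_{j=1}^{n}a_{n-j}\delta_t\rho^j$ and observing that the telescoping identity gives $\tau^{1-\alpha}\sum_{j=1}^{n}a_{n-j}=\tau^{1-\alpha}n^{1-\alpha}\le T^{1-\alpha}$, independent of $n$ and $\tau$. Second, and the genuine obstacle, the term $\lambda_1\|\theta^n\|_{L^2}^2$ on the right of the recurrence cannot be absorbed into the left by a naive energy argument because $D_{\tau}^{\alpha}$ is nonlocal in time; this is precisely the difficulty that the new Gronwall-type inequality of Section~\ref{sec3} is designed to handle, and it is from the absorption step carried out there that the smallness condition $\tau\le\tau_0$ stated in the theorem will originate.
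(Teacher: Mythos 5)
Your proposal is correct and follows essentially the same route as the paper's proof: the same Ritz-projection splitting $u^n-U_h^n=\rho^n+\theta^n$, uniqueness via the symmetric positive definite coefficient matrix, the same energy argument with $v_h=\theta^n$ using the positivity $(D_\tau^\alpha\theta^n,\theta^n)\ge\tfrac12 D_\tau^\alpha\|\theta^n\|_{L^2}^2$ (which the paper proves from the monotonicity of the $a_j$ rather than citing), and the same reduction to the fractional Gronwall inequality of Lemma~\ref{lemma:Gronwall-uniformInitial}, with the correct identification that the restriction $\tau\le\tau_0$ originates in the absorption of the $\lambda_1\omega^n$ term there. The only cosmetic deviations are bookkeeping: the paper folds $f(u^n)-f(u^{n-1})$ into the truncation term $T^n$ before invoking the Lipschitz condition, and it bounds $D_\tau^\alpha(R_hu^n-u^n)$ by inserting ${}_0^C\mathcal{D}_{t_n}^{\alpha}R_hu$ as an intermediate quantity, whereas your (equally valid, arguably cleaner) telescoping bound uses $\tau^{1-\alpha}\sum_{j=1}^{n}a_{n-j}=t_n^{1-\alpha}\le T^{1-\alpha}$ together with $\|\delta_t\rho^j\|_{L^2}\le C\tau h^{r+1}$.
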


\begin{remark}
We point out that the smoothness of the initial solution and $f$
does not always imply the smoothness of the exact solution for time-fractional equations.
In other words, the exact solution may
not have the requisite regularity around $t=0$ \cite{jin15,jin-la16,mart16}, which may lead to some possible loss of accuracy for $Q^n$.
For example, by taking into account of the possible initial layer and
weaker regularity of the exact solutions (\cite{mart16}, Lemma 5.1),
the maximum truncation error $Q^n$ satisfies
\[\max_{1\leq n \leq N} \|Q^n\|_{L^2}\leq C\tau^\alpha. \]
Then in Theorem \ref{main}, we only can obtain the error estimate by
\begin{eqnarray}
\max_{1\leq n \leq N} \|u^{n}-U_h^n\|_{L^2}
\leq C(\tau^\alpha+h^{r+1}).
\label{error-11}
\end{eqnarray}
The result \eqref{error-11} can be proved similarly without
any additional difficulty
by using our Gronwall type inequality for discrete $L1$-approximation.
\end{remark}

\begin{remark}
The proof of Theorem \ref{main} is based on a Lipschitz condition. If $f\in C^1(\mathbb{R})$, Theorem \ref{main} still holds. In fact, by using the mathematical induction and inverse inequality, we have
\begin{eqnarray}
\| U^{n-1}_h\|_{L^{\infty}}
\leq \| R_hu^{n-1}\|_{L^{\infty}}+\| R_hu^{n-1}-U^{n-1}_h\|_{L^{\infty}}
\leq \| R_hu^{n-1}\|_{L^{\infty}}+ Ch^{-\frac{d}{2}} (\tau + h^{r+1}),
\label{inverse}
\end{eqnarray}where $R_h$ denotes the Ritz projection operator. As
we can see from \eqref{inverse}, the boundedness of $\|U_h^{n-1}\|_{L^\infty}$
can be obtained while mesh size being small. Therefore, we have
 \[
 \|f(u^{n-1})-f(U^{n-1}_h)\|_{L^2}
 = \|f'(\xi) ( u^{n-1}-U^{n-1}_h)\|_{L^2}
 \leq
 C\| u^{n-1}-U^{n-1}_h\|_{L^2},~~\xi \in (u^{n-1},  U_h^{n-1}).
\]
Hence, the results in Theorem \ref{main} can be proved by using similar
analysis under the assumption $f\in C^1(\mathbb{R})$.
\end{remark}

We now present two more high-order fully discrete linearized methods.

With the Newton linearized approximation to the nonlinear term, a
linearized $L1$-Galerkin FEM is: to seek $U_h^n\in V_h$ such that
\begin{eqnarray}
 \left(D_{\tau}^{\alpha}U_h^n, v_h \right)\!+\!\left(\nabla  U_h^{n },
\nabla v_h \right)\! =\!
\left(f(U_h^{n\!-\!1})\!+\!f_1(U_h^{n\!-\!1})
( U_h^n\!-\!U_h^{n\!-\!1}),v_h \right),
    ~~n=1,\cdots,N,\label{s2}
\end{eqnarray}
where $f_1(U_h^{n-1})=\frac{\partial f}{\partial u}|_{u=U_h^{n-1}}$.

Moreover, with an extrapolation to the nonlinear term,
a linearized $L1$-Galerkin FEM is:  to seek $U_h^n\in V_h$ such that
\begin{eqnarray}
\left(D_{\tau}^{\alpha}U_h^n, v_h \right)+\left(\nabla  U_h^{n },\nabla v_h \right) =
\left(f(\widehat U_h^n),v_h \right), ~n=1,\cdots,N,
\label{s3}
\end{eqnarray}
where $\widehat U_h^n=2U_h^{n-1}-U_h^{n-2}$ for $n=2,\cdots,N$ and
$\widehat U_h^1$ can be obtained by solving the governing equation \begin{eqnarray*}
\left(D_{\tau}^{\alpha} \widehat U_h^1, v_h \right)\!
+\!\left(\nabla  \widehat U_h^1,\nabla v_h \right)\! =\!
\left(f( U_h^0)\!+\!f_1( U_h^0)( \widehat U_h^1-\!U_h^0),v_h \right).
\end{eqnarray*}

We next present the error estimates
of schemes \eqref{s2} and \eqref{s3}
in the following theorem.

\begin{theorem} \label{main-1}
Suppose that the system \eqref{i1}-\eqref{i1-ib} has a unique solution
$u \in
C^2([0,T];L^2(\Omega))\cap C^1([0,T];H^{r+1}(\Omega))$. Then, there exists a positive constant $\tau^*_0$,
such that when $\tau\leq \tau^*_0$,
the finite element system
\eqref{s2} or \eqref{s3}
admits a unique solution $U_h^n$,
$n=1,2,\cdots,N$, satisfying
\begin{eqnarray}
\|u^{n}-U_h^n\|_{L^2}
\leq C^*_0(\tau^{2-\alpha}+h^{r+1}),
\label{error-2}
\end{eqnarray}
where $C^*_0$ is a positive constant independent of $\tau$ and $h$.
\end{theorem}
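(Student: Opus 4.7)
The plan is to follow the same error-splitting strategy used for Theorem \ref{main}, but to exploit the fact that the Newton linearization in \eqref{s2} and the Adams--Bashforth-type extrapolation $\widehat U_h^n=2U_h^{n-1}-U_h^{n-2}$ in \eqref{s3} are both second-order temporally consistent with $f(u^n)$. Consequently the overall temporal error will be limited only by the $L1$ truncation error $Q^n$, which is $O(\tau^{2-\alpha})$ by \eqref{Qn}, rather than by the $O(\tau)$ consistency of the cruder scheme \eqref{s1}, and this is what upgrades the rate from $O(\tau)$ to $O(\tau^{2-\alpha})$.

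First I would split the error via the Ritz projection $R_h\colon H_0^1(\Omega)\to V_h$, writing $u^n-U_h^n=\eta^n+\theta^n$ with $\eta^n=u^n-R_h u^n$ and $\theta^n=R_h u^n-U_h^n\in V_h$. Standard bounds give $\|\eta^n\|_{L^2}\leq Ch^{r+1}$, so only $\theta^n$ needs to be estimated. Subtracting each scheme from the variational form of \eqref{i1} yields an error equation of the form
\begin{align*}
(D_\tau^\alpha\theta^n,v_h)+(\nabla\theta^n,\nabla v_h)=(Q^n+\rho^n+P^n,v_h),\qquad \forall v_h\in V_h,
\end{align*}
where $\rho^n$ collects Ritz-projection remainders bounded by $Ch^{r+1}$, and $P^n$ is the discrepancy between $f(u^n)$ and its linearized surrogate. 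For \eqref{s2}, a Taylor expansion of $f(u^n)$ about $u^{n-1}$ combined with $U_h^{n-1}=u^{n-1}-e^{n-1}$ gives
\begin{align*}
P^n=f_1(U_h^{n-1})(e^n-e^{n-1})+O(\tau^2)+O(|e^{n-1}|^2),
\end{align*}
while for \eqref{s3} the Lipschitz property \eqref{LC} together with the identity $u^n-\widehat U_h^n=(u^n-2u^{n-1}+u^{n-2})+2e^{n-1}-e^{n-2}$ gives $\|P^n\|_{L^2}\leq C\tau^2+C\|e^{n-1}\|_{L^2}+C\|e^{n-2}\|_{L^2}$. Setting $v_h=\theta^n$ and using the discrete positivity $(D_\tau^\alpha\theta^n,\theta^n)\geq \tfrac12 D_\tau^\alpha\|\theta^n\|_{L^2}^2$, which is standard for the $L1$ operator, leads to an inequality of the form
\begin{align*}
D_\tau^\alpha\|\theta^n\|_{L^2}^2\leq \lambda_1\|\theta^n\|_{L^2}^2+\lambda_2\|\theta^{n-1}\|_{L^2}^2+C(\tau^{2-\alpha}+h^{r+1})^2,
\end{align*}
which is precisely the hypothesis of the fractional Gronwall inequality of Section \ref{sec3} and yields \eqref{error-2}. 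The additional $\|\theta^{n-2}\|_{L^2}$ term that appears for the extrapolation scheme can be absorbed into the Gronwall sum in a routine way, and the starting value $\widehat U_h^1$ is handled by applying the one-step Newton analysis to its defining equation.

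The main obstacle, as usual for linearized FEMs, is that the Taylor expansion bound on $P^n$ for \eqref{s2} requires an a priori $L^\infty$ bound on $U_h^{n-1}$ in order to control $f_1(U_h^{n-1})$ and the quadratic remainder involving $f_2$. I would close this circle by the same inverse-inequality-plus-induction argument sketched in \eqref{inverse}: assuming \eqref{error-2} holds up to level $n-1$, the inverse inequality gives
\begin{align*}
\|U_h^{n-1}\|_{L^\infty}\leq \|R_h u^{n-1}\|_{L^\infty}+Ch^{-d/2}(\tau^{2-\alpha}+h^{r+1}),
\end{align*}
which remains bounded once $\tau$ (and, implicitly, $h$) is sufficiently small; the Gronwall step then closes the induction at level $n$. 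Scheme \eqref{s3} is treated in the same framework but is somewhat easier, since only the Lipschitz property \eqref{LC} is used and no $L^\infty$ bound of $U_h^{n-1}$ enters the estimate of $P^n$.
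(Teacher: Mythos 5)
Your proposal is correct and takes essentially the same route as the paper: the paper gives no separate proof of Theorem \ref{main-1}, stating only that the analysis of \eqref{s1} "can be easily extended," and your argument is exactly that extension---Ritz-projection splitting, the positivity bound $(D_\tau^\alpha\theta^n,\theta^n)\ge\tfrac12 D_\tau^\alpha\|\theta^n\|_{L^2}^2$, the fractional Gronwall inequality of Lemma \ref{lemma:Gronwall-uniformInitial}, and the inverse-inequality induction of Remark 2, with the temporal rate upgraded to $2-\alpha$ because the Newton and extrapolation linearizations are $O(\tau^2)$-consistent so that only the $L1$ truncation error $Q^n=O(\tau^{2-\alpha})$ survives. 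The two details you treat as routine are in fact routine: the extra $\|\theta^{n-2}\|_{L^2}^2$ term for \eqref{s3} is absorbed by rerunning the matrix argument of Lemma \ref{lemma:2} with a second shifted matrix and the bound $p_i\le p_{i+2}/(2-2^{1-\alpha})^2$, and unique solvability of \eqref{s2} follows for small $\tau$ since the induction's $L^\infty$ bound on $U_h^{n-1}$ keeps $f_1(U_h^{n-1})$ bounded and the bilinear form coercive.
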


The representation of this paper focuses on the numerical analysis for the linearized scheme \eqref{s1}.
The analysis for \eqref{s1} can be easily extended to the linearized schemes
\eqref{s2} and \eqref{s3}. The main difference is that the schemes \eqref{s2} and \eqref{s3} have the convergent
order $2-\alpha$ in the temporal direction, while
the scheme \eqref{s1} has the order $1$.

In the remainder, we denote by $C$ a generic
positive constant, which is independent of $n,h,\tau,\tau_0,$
$\tau_0^*, C_0$ and
$C_0^*$, and may depend upon $u$ and $f$.

\section{Error analysis}\label{sec3}
\setcounter{equation}{0}
In this section, we will prove the optimal error estimate
given in Theorem 2.1 for proposed scheme \eqref{s1}.
As we can see below, the following Gronwall type inequality plays a key role in our analysis. For brevity, we first present the results of the Gronwall type inequality, and leave the proof to section 3.2.

\begin{lemma}\label{lemma:Gronwall-uniformInitial}
Suppose that the nonnegative sequences $\{\omega^{n}, g^n\,|\,n=0,1,2,\cdots\}$
satisfy
\begin{align*}
D_{\tau}^{\alpha}\omega^n\leq \lambda_1 \omega^n+\lambda_2 \omega^{n-1}+g^n,
\quad n\geq1,
\end{align*}
where $\lambda_1\geq0$ and $\lambda_2\ge0$ are constants.
Then, there exists a positive
constant $\tau^*$ such that, when $\tau\leq \tau^*$,
\begin{align}\label{gron-101}
\omega^n
\leq
2\Big({\omega^0+\frac{t_n^{\alpha}}{\Gamma(1+\alpha)}
\max_{0\leq j\leq n}g^j}\Big)E_\alpha(2\lambda t_n^\alpha),~~~
1\leq n\leq N,
\end{align}
where $E_{\alpha}(z)=\sum_{k=0}^{\infty}\frac{z^{k}}{\Gamma(1+k\alpha)}$ is
the Mittag-Leffler function and
$\lambda=\lambda_1+\frac{\lambda_2}{(2-2^{1-\alpha}) } $.
\end{lemma}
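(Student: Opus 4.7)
My plan is to transform the given fractional difference inequality into an explicit positive recursion for $\omega^n$ and then dominate it by a Mittag--Leffler majorant via strong induction. First I would multiply the hypothesis by $\mu := \tau^{\alpha}\Gamma(2-\alpha)$ and expand $D_{\tau}^{\alpha}$ through \refe{l1} and \refe{add-l1}. Since $a_0 = 1$ and $\{a_i\}$ is strictly decreasing and positive, all coefficients $-b_{n-j}$ with $1 \le j \le n$ are nonnegative, and isolating $\omega^n$ yields
\[
(1-\mu\lambda_1)\omega^n \le a_{n-1}\omega^0 + \sum_{j=1}^{n-1}(a_{n-j-1}-a_{n-j})\omega^j + \mu\lambda_2\omega^{n-1} + \mu g^n.
\]
Choosing $\tau^*$ small enough that $\mu\lambda_1 \le 1/2$ whenever $\tau \le \tau^*$ allows me to divide by $1-\mu\lambda_1$ and obtain a recursion $\omega^n \le \sum_{j=0}^{n-1}\kappa_{n,j}\omega^j + \rho_n$ with $\kappa_{n,j},\rho_n \ge 0$. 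A pivotal observation is that the coefficient of $\omega^{n-1}$ equals $(a_0-a_1)+\mu\lambda_2 = (2-2^{1-\alpha})+\mu\lambda_2$, which factors as $(2-2^{1-\alpha})\bigl(1+\mu\lambda_2/(2-2^{1-\alpha})\bigr)$; this is precisely what forces the effective reaction rate $\lambda = \lambda_1 + \lambda_2/(2-2^{1-\alpha})$ in the stated bound, while the prefactor $1/(1-\mu\lambda_1) \le 2$ supplies the overall factor of $2$ in \refe{gron-101}.

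Next I would introduce the candidate majorant
\[
\Phi^n := 2\Bigl(\omega^0 + \frac{t_n^{\alpha}}{\Gamma(1+\alpha)}\max_{0\le j\le n}g^j\Bigr) E_{\alpha}(2\lambda t_n^{\alpha}),
\]
and show $\omega^n \le \Phi^n$ by strong induction on $n$; the base case $n=0$ is immediate. The induction step relies on two discrete-calculus estimates, namely the discrete analogs of $D^{\alpha}(t^{\alpha}/\Gamma(1+\alpha)) = 1$ and $D^{\alpha}E_{\alpha}(\nu t^{\alpha}) = \nu E_{\alpha}(\nu t^{\alpha})$: more precisely $D_{\tau}^{\alpha}(t_n^{\alpha}/\Gamma(1+\alpha)) \ge 1 + O(\tau)$ and $D_{\tau}^{\alpha}E_{\alpha}(2\lambda t_n^{\alpha}) \ge 2\lambda E_{\alpha}(2\lambda t_n^{\alpha})(1-O(\tau))$. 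These together show that $\Phi^n$ itself satisfies the \emph{reverse} inequality $D_{\tau}^{\alpha}\Phi^n \ge \lambda_1\Phi^n + \lambda_2\Phi^{n-1} + g^n$, and then the positive-coefficient recursion above furnishes a discrete comparison principle for the operator $v \mapsto D_{\tau}^{\alpha}v^n - \lambda_1 v^n - \lambda_2 v^{n-1}$; applying it to $\Phi^n - \omega^n$ closes the induction.

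The hard part will be establishing the discrete Mittag--Leffler estimate $D_{\tau}^{\alpha}E_{\alpha}(2\lambda t_n^{\alpha}) \ge 2\lambda E_{\alpha}(2\lambda t_n^{\alpha})(1-O(\tau))$ uniformly in $n$: the $L1$ scheme loses accuracy on functions that are not smooth at the origin, and $E_{\alpha}(2\lambda t^{\alpha})$ contains $t^{k\alpha}$ terms with precisely this behaviour. My approach would be to work term by term on the series $E_{\alpha}(\nu t^{\alpha}) = \sum_{k\ge 0}(\nu t^{\alpha})^k/\Gamma(1+k\alpha)$, using the explicit form of $D_{\tau}^{\alpha}t_n^{k\alpha}$ obtained by a direct computation with the weights $a_i$, and then absorbing the cumulative $O(\tau)$ corrections into the factor of $2$ appearing in $\Phi^n$ by further shrinking $\tau^*$ if needed.
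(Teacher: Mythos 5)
Your skeleton --- expand $D_{\tau}^{\alpha}$, use positivity and monotonicity of the weights $a_i$ to isolate $\omega^n$ with nonnegative coefficients, absorb $\mu\lambda_1\omega^n$ for $\tau\le\tau^*$ (which is indeed where the paper's factor $2$ comes from), then compare against a Mittag--Leffler majorant --- is a genuinely different route from the paper's, but it has a real gap precisely at the step you yourself flag as hard, and the sketch you give for that step would not go through. The estimate $D_{\tau}^{\alpha}E_{\alpha}(2\lambda t_n^{\alpha})\ge 2\lambda E_{\alpha}(2\lambda t_n^{\alpha})(1-O(\tau))$ is not correct with an $O(\tau)$ correction, and even a weaker version is delicate: for the convex terms $t^{k\alpha}$ with $k\alpha>1$ the $L1$ operator \emph{under}estimates the Caputo derivative, and at small $n$ the per-term relative deficit is $O(1)$, not $O(\tau)$ --- for instance $D_{\tau}^{\alpha}t_1^{k\alpha}=\tau^{(k-1)\alpha}/\Gamma(2-\alpha)$, whereas the exact value is $\frac{\Gamma(1+k\alpha)}{\Gamma(1+(k-1)\alpha)}\tau^{(k-1)\alpha}$, and the ratio of the two tends to $0$ as $k\to\infty$. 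Only the series weights $1/\Gamma(1+k\alpha)$ can rescue the aggregate, and proving a uniform-in-$n$ lower bound of this kind amounts to redoing the error analysis the paper carries out in Lemma \ref{lemma:recursionCoefficient} (the $R_k^j$ terms, the Beta-function identity \eqref{recursionCoefficient-proof-q}, and the separate concave/convex regimes with the telescoping control \eqref{recurs-22}), none of which your plan supplies. A second, independent obstruction is that your majorant is a \emph{product} with an $n$-dependent factor $G_n=\max_{0\le j\le n}g^j$: the operator $D_{\tau}^{\alpha}$ obeys no product rule, and $D_{\tau}^{\alpha}(G_nh^n)\ge G_nD_{\tau}^{\alpha}h^n$ is false in general because the history terms carry $G_j\le G_n$; so you cannot certify the supersolution property by combining the two separate "eigenfunction" estimates. (This particular defect is repairable --- freeze $G_n$ at its value at the target time $n$ and compare on $j\le n$ --- but you did not do this, and the main estimate above remains unproven.) Finally, your "pivotal observation" that the coefficient of $\omega^{n-1}$ equals $(2-2^{1-\alpha})+\mu\lambda_2$ does not by itself produce $\lambda=\lambda_1+\lambda_2/(2-2^{1-\alpha})$: in the paper that constant emerges only after inverting the $L1$ operator with the complementary kernel $\{p_n\}$ of \eqref{p} and applying $p_i\le p_{i+1}/(2-2^{1-\alpha})$ in the matrix comparison $J_2V\le J_1V/(2-2^{1-\alpha})$, a mechanism absent from your argument.

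For contrast, the paper avoids discrete supersolutions altogether: summing the hypothesis against the kernel $\{p_{n-j}\}$, whose defining property $\sum_{j=k}^{n}p_{n-j}a_{j-k}=1$ in \eqref{Lem-1} telescopes the $L1$ operator \emph{exactly} (so no discrete eigenfunction estimate is ever needed), converts the problem into a discrete Volterra inequality $V\le JV+2\Psi_n\overrightarrow{e}$ with a nilpotent upper-triangular matrix $J$; iterating $n$ times and bounding $J^m\overrightarrow{e}$ by powers of $2\lambda t_n^{\alpha}$ via \eqref{Lem-3} (Lemma \ref{lemma:2}) assembles the Mittag--Leffler function directly. The only place where $L1$-type interpolation errors for the functions $t^{m\alpha}$ appear is in \eqref{Lem-3}, where they enter with a sign and summation structure that \eqref{Lem-1} controls. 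If you wish to complete your route, the honest workload is to prove your uniform discrete Mittag--Leffler lower bound with an $O(\tau^{\alpha})$-type correction absorbed by the slack $2\lambda>\lambda_1+\lambda_2$, which in effect reconstructs Lemma \ref{lemma:recursionCoefficient} in the forward direction; as written, the proposal asserts rather than proves the heart of the lemma.
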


\subsection{Proof of Theorem \ref{main}} \label{TH2.1Proof}
To prove the main results, we first rewrite the system \refe{s1} as
\begin{align}
\frac{\tau^{-\alpha}}{\Gamma(2-\alpha)}
b_0 \left (U_h^n, v_h \right )
+ \left(\nabla  U_h^{n },\nabla v_h \right)
= \left(f(U_h^{n-1}),v_h \right)
- \frac{\tau^{-\alpha}}{\Gamma(2-\alpha)}
\sum _{j=0}^{n-1} b_{n-j} \left (U_h^j, v_h \right ). \label{CM}
\end{align}
It is obvious that the coefficient matrix of the linear system \eqref{CM} is symmetric and positive definite. Thus, the existence and uniqueness of the solution of the FEM system \refe{s1} follow
immediately.

We now let $\Pi_h$ be a Lagrange interpolation operator and
$R_h: H_0^1(\Omega)\rightarrow V_h$ be the Ritz
projection operator defined by
\begin{eqnarray} \label{RPg}
\left(\nabla(v-R_h v),\nabla v_h \right)=0,\quad
\textrm{for all }
 v_h\in V_h.\label{R_h}
\end{eqnarray}
By classical interpolation theory and finite element theories \cite{Th}, we have
\begin{align}
&
\|v-\Pi_h v\|_{L^2}+h\|\nabla(v-\Pi_h v)\|_{L^2}
\leq Ch^{s+1}\|v\|_{H^{s+1}},
\label{Pih}\\
&
\|v-R_h v\|_{L^2}+h\|\nabla (v-R_h v)\|_{L^2}\leq Ch^{s+1}\|v\|_{H^{s+1}},
\label{Rh}
\end{align}
for any $v\in H_0^1(\Omega)\cap H^{s+1}(\Omega)$ and $1\leq s\leq r$.

From (\ref{i1}), we can see that the exact
solution $u^n$ satisfies the following equation
\begin{align}
 D_{\tau}^{\alpha} u^n-\Delta  u^{n }
=f( u^{n-1})+T^{n}
\label{true1}
\end{align}
with the truncation error $T^{n}$ given by
\[
T^{n}= D_{\tau}^{\alpha} u^n-~_0^C\!\mathcal{D}_{t_n}^\alpha u+ f( u^{n})-f( u^{n-1}).
\]
By \eqref{Qn} and Taylor expansion, we have
\begin{align}\label{loc-301}
\| T^n \|_{L^2} \leq C\tau.
\end{align}

Let
$$
e_h^n=R_hu^n-U_h^n,\quad n=0,1,\cdots,N.
$$
Subtracting \eqref{true1} from the numerical scheme \eqref{s1},
it is easy to see that
$e_h^n$ satisfies
\begin{align}
(D_\tau^\alpha e_h^n,v_h)+(\nabla e_h^n,\nabla v_h)
=(D_\tau^\alpha (R_hu^n-u^n),v_h)
+(f(u^{n-1})-f(U_h^{n-1}),v_h)+(T^n,v_h)
\label{eh}
\end{align}
for any $v_h\in V_h$ and $n=1,2,\cdots,N$.


Taking $v_h=e_h^n$ in (\ref{eh}), we have
\begin{align}
\label{dis-1}
&(D_\tau^\alpha e_h^n,e_h^n)+\|\nabla e_h^n\|_{L^2}^2 \nonumber\\
&\leq
(\frac{L}{2}+1)\|e_h^n\|_{L^2}^2+\frac{L}{2}\|e_h^{n-1}\|_{L^2}^2
+\frac{1}{2}\|D_{\tau}^{\alpha}(R_hu^n-u^n)\|_{L^2}^2+Ch^{2(r+1)}
+\frac{1}{2}\|T^n\|_{L^2}^2\nonumber\\
&\leq
(\frac{L}{2}+1)\|e_h^n\|_{L^2}^2+\frac{L}{2}\|e_h^{n-1}\|_{L^2}^2
+C(\tau+h^{r+1})^2,
\end{align}
where we have used \eqref{loc-301} and
\[
\|D_\tau^{\alpha} R_hu^n \!-\!~_0^C\!\mathcal{D}_{t_n}^\alpha u\|_{L^2}
\leq \|D_\tau^{\alpha} R_hu^n \!-\!~_0^C\!\mathcal{D}_{t_n}^\alpha R_hu\|_{L^2}
+\|~_0^C\!\mathcal{D}_{t_n}^\alpha R_hu -~_0^C\!\mathcal{D}_{t_n}^\alpha u\|_{L^2}
\leq C\tau^{2-\alpha}+Ch^{r+1}.
\]

On the other hand, noting that the coefficients $a_j$ ($j=0,\cdots,N$) defined in \eqref{a} satisfy
\begin{eqnarray*}
1=a_0>a_1>\cdots>a_N>0,
\end{eqnarray*}
we obtain
\begin{align}
\label{dis-2}
(D_\tau^\alpha e^n_h, e_h^n)
=&\frac{\tau^{-\alpha}}{\Gamma(2-\alpha)}
\Big(a_0e_h^n-\sum_{j=1}^{n-1}
(a_{n-j-1} -a_{n-j})e_h^j-a_{n-1}e_h^0,e_h^n\Big)\nonumber\\
 \geq& \frac{\tau^{-\alpha}}{\Gamma(2-\alpha)}
\Big(a_0\|e_h^n\|^2_{L_2}\!-\!\sum_{j=1}^{n-1}
(a_{n\!-\!j-\!1}\!-\!a_{n-j})
\frac{\|e_h^j\|^2_{L_2}\!+\!\|e_h^n\|^2_{L_2}}{2}\!\nonumber\\
&\qquad  -a_{n-1}\frac{\|e_h^0\|^2_{L_2}+\|e_h^n\|^2_{L_2}}{2}\Big)
\nonumber\\
 =&\frac{\tau^{-\alpha}}{2\Gamma(2-\alpha)}
\Big(a_0\|e_h^n\|_{L^2}^2-\sum_{j=1}^{n-1}
(a_{n-j-1} -a_{n-j})\|e_h^j\|_{L^2}^2-a_{n-1}\|e_h^0\|_{L_2}^2\Big)
\nonumber\\
 =&\frac{\tau^{-\alpha}}{2\Gamma(2-\alpha)}
\sum_{j=0}^n b_{n-j} \|e_h^j\|_{L_2}^2
\nonumber\\
=&\frac{1}{2}D_\tau^\alpha \|e_h^n\|^2_{L^2}.
\end{align}
Combining \eqref{dis-1} and \eqref{dis-2}, we get
\[
D_\tau^\alpha \|e_h^n\|^2_{L^2} \leq (L+2)\|e_h^n\|^2_{L^2}+L\|e_h^{n-1}\|^2_{L^2}
+2C(\tau+h^{r+1})^2.
\]
By Lemma \ref{lemma:Gronwall-uniformInitial},
there exists a positive constant $\tau^*$ such that, when $\tau\le\tau^*$,
\begin{align*}
\|e_h^n\|_{L^2}\leq C(\tau+h^{r+1}).
\end{align*}
With \eqref{Rh}, the above estimate further shows that
\begin{align}
\|u^n-U_h^n\|_{L^2}
\leq
\|u^n-R_h u^n\|_{L^2}+\|e_h^n\|_{L^2}
\leq
C(\tau+h^{r+1}).
\end{align}
Taking $\tau_0\leq \tau^*$ and $C_0\ge C$,
the proof of Theorem \ref{main} is complete.
\quad \endproof

\subsection{The proof of Lemma \ref{lemma:Gronwall-uniformInitial}}
To prove Lemma \ref{lemma:Gronwall-uniformInitial},
we first present two useful lemmas.

\begin{lemma}\label{lemma:recursionCoefficient}
Let $\{ p_n \}$ be a sequence defined by
\begin{align}
p_{0}=1,\quad p_{n}=\sum_{j=1}^{n}(a_{j-1}-a_j)p_{n-j},\quad n\geq1.
\label{p}
\end{align}
Then it holds that
\begin{align}
& \text{(i) } \quad 0<p_n < 1, \qquad  \sum_{j=k}^{n}p_{n-j}a_{j-k}=1,
\quad 1\leq k\leq n,
\label{Lem-1}
\\
&\text{(ii) } \quad \Gamma(2-\alpha)\sum_{j=1}^{n}p_{n-j}\leq\frac{n^{\alpha}}{\Gamma(1+\alpha)},
\label{Lem-2}
\end{align}
and for $m =1,2,\cdots$,
\begin{align}
\text{(iii) } \quad\frac{\Gamma(2-\alpha)}{\Gamma(1+(m-1)\alpha)}\sum_{j=1}^{n-1}p_{n-j}j^{(m-1)\alpha}\leq\frac{n^{m\alpha}}{\Gamma(1+m\alpha)}.
\label{Lem-3}
\end{align}
\end{lemma}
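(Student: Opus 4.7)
My plan is to establish (i), (ii), (iii) in that order, with (ii) being essentially the $m=1$ case of (iii) up to a boundary term.

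For (i), I would first note that $\{a_i\}$ is strictly decreasing because $x\mapsto x^{1-\alpha}$ is strictly concave; this makes every coefficient $a_{j-1}-a_j$ in \eqref{p} strictly positive, so $p_n>0$ follows by a routine induction on $n$. For the convolution identity $\sum_{j=k}^{n}p_{n-j}a_{j-k}=1$, I would reindex via $l:=n-k$, $m:=j-k$ and reduce it to the single equivalent statement $\sum_{m=0}^{l}p_{l-m}a_m=1$ for every $l\geq 0$. This I would prove by induction on $l$, splitting the recurrence as
\[
p_l \;=\; \sum_{m=0}^{l-1}a_m\,p_{l-1-m} \;-\; \sum_{m=1}^{l}a_m\,p_{l-m},
\]
so that the first sum equals $1$ by the inductive hypothesis at $l-1$. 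Rearranging yields the identity at $l$, and the same rearrangement combined with positivity gives $p_n=1-\sum_{m=1}^{n}a_m p_{n-m}<1$.

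For (ii) and (iii) I would use induction on $m$. The central tool is the $k=1$ instance of (i), namely $\sum_{j=1}^{n}p_{n-j}a_{j-1}=1$, which pins down a weighted sum of the $p_k$'s. For the base case $m=1$, I would use the integral formula $a_{j-1}=(1-\alpha)\int_{j-1}^{j}s^{-\alpha}\,ds$ together with summation by parts to compare this identity with $\int_0^n s^{-\alpha}\,ds=n^{1-\alpha}/(1-\alpha)$, extracting the factor $n^{\alpha}/\Gamma(1+\alpha)$ with the sharp constant. The small discrepancy between the ranges $j=1,\dots,n$ in (ii) and $j=1,\dots,n-1$ in (iii) at $m=1$ is handled by adding/subtracting the single term $p_0=1$.

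For the inductive step $m\to m+1$ in (iii), I would expand $j^{m\alpha}=\sum_{k=1}^{j}\bigl(k^{m\alpha}-(k-1)^{m\alpha}\bigr)$, interchange the order of summation, and bound each resulting inner partial sum $\sum_{i=1}^{n-k}p_i$ using an earlier case of the result. I expect the main obstacle to be bookkeeping the Gamma-function constants so that the telescoping produces exactly $n^{(m+1)\alpha}/\Gamma(1+(m+1)\alpha)$ on the right-hand side rather than a weaker bound off by a Beta-function factor. Concretely, one wants a discrete analogue of the continuous identity
\[
\frac{1}{\Gamma(\alpha)}\int_0^t (t-s)^{\alpha-1}\,\frac{s^{(m-1)\alpha}}{\Gamma(1+(m-1)\alpha)}\,ds \;=\; \frac{t^{m\alpha}}{\Gamma(1+m\alpha)},
\]
and sharpness must survive the telescoping comparison applied to $k^{m\alpha}-(k-1)^{m\alpha}$, whose behavior differs between the regimes $m\alpha\leq 1$ (concave, bounded via the left-endpoint derivative) and $m\alpha>1$ (convex, requiring the right-endpoint derivative). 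Carefully splitting these regimes and matching each against the desired Gamma-function constant is the step I anticipate to be the most delicate part of the argument.
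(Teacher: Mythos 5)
Your part (i) is sound and essentially the paper's argument (the paper phrases the induction as the invariance $\Phi_{n+1}=\Phi_n$ of the convolution sum $\Phi_n=\sum_{j=1}^n p_{n-j}a_{j-1}$, but the content is the same), and deducing (iii) at $m=1$ from (ii) by dropping the $j=n$ term is fine. The genuine gap is the step $m\to m+1$ of (iii), which you explicitly defer and which is precisely where your convolution/telescoping plan breaks. After writing $j^{m\alpha}=\sum_{k=1}^{j}\bigl(k^{m\alpha}-(k-1)^{m\alpha}\bigr)$ and interchanging sums, the inner quantity is $\sum_{j=k}^{n-1}p_{n-j}=\sum_{i=1}^{n-k}p_i$, and the only bound available to you, (ii), gives $\Gamma(2-\alpha)\sum_{i=0}^{n-k}p_i\le (n-k+1)^{\alpha}/\Gamma(1+\alpha)$. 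The shifted argument $n-k+1$ is fatal to sharpness: on $[k-1,k]$ one has $(n-k+1)^{\alpha}\ge (n-s)^{\alpha}$, so the resulting discrete sum $\sum_{k}\bigl(k^{m\alpha}-(k-1)^{m\alpha}\bigr)(n-k+1)^{\alpha}$ dominates, rather than is dominated by, the continuous Beta integral $m\alpha\int_0^n s^{m\alpha-1}(n-s)^{\alpha}\,ds$ whose value produces the exact constant; an honest comparison lands you at roughly $(n+1)^{(m+1)\alpha}/\Gamma(1+(m+1)\alpha)$, and you would then have to show that the subtracted boundary terms (the $-p_0$ in each inner sum, totalling $\Gamma(2-\alpha)(n-1)^{m\alpha}$) absorb the overshoot uniformly in $n$ and $m$ --- exactly the Gamma-function bookkeeping you flag as delicate and do not carry out. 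Note also that your step as described invokes only (ii), not the case-$m$ statement, so it is not really an induction: the entire burden is this one discrete Beta-type inequality, which is left unproved. Your base case for (ii) is likewise underspecified: it is not clear how comparing $a_{j-1}=(1-\alpha)\int_{j-1}^{j}s^{-\alpha}\,ds$ with $\int_0^n s^{-\alpha}\,ds=n^{1-\alpha}/(1-\alpha)$ yields the factor $n^{\alpha}/\Gamma(1+\alpha)$; the crude dual estimate $\sum_{j=1}^n p_{n-j}\le 1/a_{n-1}$ gives only the weaker constant $\Gamma(1-\alpha)$ in place of $1/\Gamma(1+\alpha)$.

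The paper closes all of this by a mechanism you would need to import. For each fixed $m$ (no induction on $m$) it applies the L1 quadrature to the single auxiliary function $q(t)=t^{m\alpha}/\Gamma(1+m\alpha)$, so that the exact identity $j^{(m-1)\alpha}/\Gamma(1+(m-1)\alpha)=\Gamma(2-\alpha)^{-1}\sum_{k=1}^{j}a_{j-k}\delta_t q^k+\sum_{k=1}^{j}R_k^j$ holds, where $R_k^j$ is the interpolation error with controllable sign and size: $R_k^j\le 0$ whenever $m\alpha\le 1$ ($q$ concave, which also settles (ii) with the sharp constant), while for $m\alpha>1$ one has the explicit bound $R_k^j\le a_{j-k}\int_{k-1}^{k}\bigl(k^{m\alpha-1}-\mu^{m\alpha-1}\bigr)d\mu/\bigl(\Gamma(2-\alpha)\Gamma(m\alpha)\bigr)$. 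Multiplying by $p_{n-j}$ and using the convolution identity (i) \emph{twice} --- once to telescope the main term to $(n-1)^{m\alpha}/\Gamma(1+m\alpha)$, and once more to collapse the weighted remainder $\sum_j p_{n-j}\sum_k R_k^j$, which is then dominated via $\sum_{k=1}^{n-1}k^{m\alpha-1}\le n^{m\alpha}/(m\alpha)$ --- the sharp constant survives because all discrete-versus-continuous losses are concentrated in a single signed error term compensated by telescoping, never compounded through repeated use of (ii). Until you either prove your discrete Beta inequality with the exact constant or adopt an equivalent of this signed-error device, your proof of (ii) and (iii) is incomplete.
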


\begin{proof}
(i) Since $a_{j-1}>a_j$ for $j\geq1$, it is easy to verify inductively from
\refe{p} that $0< p_n<1$ $(n\ge 1)$. Moreover, we have
\begin{align*}
\Phi_n\equiv&\,\sum_{j=1}^{n}p_{n-j}a_{j-1}=\sum_{j=0}^{n}p_{n-j}a_j
=\sum_{j=1}^{n+1}p_{n+1-j}a_{j-1}=\Phi_{n+1},\quad n\geq1.
\end{align*}
This implies $\Phi_n=\Phi_1=a_{0}p_{0}=1$ for $n\geq1$.
Substituting $j=l+k-1$, we further find
\begin{align*}
\sum_{j=k}^{n}p_{n-j}a_{j-k}=\sum_{l=1}^{n-k+1}p_{n-k+1-l}a_{l-1}=\Phi_{n-k+1}
=\Phi_{n}=1,\quad 1\leq k\leq n.
\end{align*}
The equality (\ref{Lem-1}) is proved.

(ii) To prove (\ref{Lem-2}) and (\ref{Lem-3}),  we introduce an
auxiliary function $q(t)=t^{m\alpha}/\Gamma(1+m\alpha)$ for $m\ge 1$. Then for $j\geq1$, we have
\begin{align}
\label{recursionCoefficient-proof-q}
\int_0^j\frac{(j-s)^{-\alpha}q'(s)}{\Gamma(1-\alpha)}d s
=\frac{B(m\alpha,1-\alpha)j^{(m-1)\alpha}}{\Gamma(1-\alpha)
\Gamma(m\alpha)}
=\frac{j^{(m-1)\alpha}}{\Gamma(1+(m-1)\alpha)},
\end{align}
where we have used the fact that for $z,w>0$
\[
B(z,w) \equiv\int_0^1s^{z-1}(1-s)^{w-1}d s=\frac{\Gamma(z)\Gamma(w)}{\Gamma(z+w)} .
\]

Let $Q(t)$ be a piecewise linear
interpolating polynomial of $q(t)$ satisfying $Q(k)=q^k:=q(k)$.
Moreover for $j\geq1$, we define the approximation error by
\begin{align}\label{app-err}
\int_0^j\frac{q'(s)-Q'(s)}{\Gamma(1-\alpha)(j-s)^{\alpha}}d s
=\sum_{k=1}^j\int_{k-1}^{k}\frac{q'(s)-Q'(s)}{\Gamma(1-\alpha)(j-s)^{\alpha}}d s\
:=
\sum_{k=1}^jR_k^j,
\end{align}
where
\begin{align*}
R_k^j=\int_{k-1}^{k}\frac{d [q(s)-Q(s)]}{\Gamma(1-\alpha)(j-s)^{\alpha}}
=-\frac{\alpha}{\Gamma(1-\alpha)}\int_{k-1}^{k}\frac{q(s)-Q(s)}{(j-s)^{\alpha+1}}d s,
\quad 1\leq k\leq j.
\end{align*}
Combining \eqref{recursionCoefficient-proof-q} and \eqref{app-err} yields
 \begin{eqnarray}\label{recursionCoefficient-proof-globalError}
\frac{j^{(m-1)\alpha}}{\Gamma(1+(m-1)\alpha)}&=&\frac{1}{\Gamma(1-\alpha)}\sum_{k=1}^j\int_{k-1}^{k}\frac{Q'(s)}{(j-s)^{\alpha}}d s+\sum_{k=1}^jR_k^j\nonumber\\
&=&\sum_{k=1}^ja_{j-k}\frac{\delta_tq^k}{\Gamma(2-\alpha)}+\sum_{k=1}^jR_k^j.
\end{eqnarray}

Noting that $q(t)$ is concave (i.e., $q''(t)\le 0$) for $m=1$, we have $Q(t) \le q(t)$,
$R_k^j \le 0$ and
\begin{eqnarray}
\label{recurs-1}
1 \leq \sum_{k=1}^ja_{j-k}\frac{\delta_tq^k}{\Gamma(2-\alpha)}.
\end{eqnarray}
Multiplying \eqref{recurs-1} by $\Gamma(2-\alpha)p_{n-j}$
and summing it over for $j$ from $1$ to $n$, we have
\begin{eqnarray*}
\Gamma(2\!-\!\alpha) \sum_{j=1}^{n}p_{n\!-\!j}
\leq\sum_{j=1}^{n}p_{n-j}\sum_{k=1}^{j}a_{j-k}{\delta_tq^k}
\!=\!\sum_{k=1}^{n}\delta_tq^k\sum_{j=k}^{n}p_{n-j}a_{j-k}
\!=\! \sum_{k=1}^{n}\delta_tq^k
\!=\! \frac{n^{\alpha}}{\Gamma(1+\alpha)},
\end{eqnarray*}
where we have used the equality (\ref{Lem-1}).

(iii) We multiply \eqref{recursionCoefficient-proof-globalError} by $\Gamma(2-\alpha)p_{n-j}$
and sum the resulting equality for $j$ from $1$ to $n-1$ to obtain
\begin{eqnarray}\label{recurs-201}
\frac{\Gamma(2-\alpha)}{\Gamma(1+(m-1)\alpha)}
\sum_{j=1}^{n-1}p_{n-j}j^{(m-1)\alpha}
&=&\sum_{j=1}^{n-1}p_{n-j}\sum_{k=1}^{j}a_{j-k}{\delta_tq^k}
+\Gamma(2-\alpha)\sum_{j=1}^{n-1}p_{n-j}\sum_{k=1}^jR_k^j\nonumber\\
&=&\sum_{k=1}^{n-1}\delta_tq^k\sum_{j=k}^{n-1}p_{n-j}a_{j-k}
+\Gamma(2-\alpha)\sum_{j=1}^{n-1}p_{n-j}\sum_{k=1}^jR_k^j\nonumber\\
&\leq&\sum_{k=1}^{n-1}\delta_tq^k+\Gamma(2-\alpha)\sum_{j=1}^{n-1}p_{n-j}\sum_{k=1}^jR_k^j\nonumber\\
&=&\frac{(n-1)^{m\alpha}}{\Gamma(1+m\alpha)}
+\Gamma(2-\alpha)\sum_{j=1}^{n-1}p_{n-j}\sum_{k=1}^jR_k^j.
\end{eqnarray}

If $1\leq m\leq1/\alpha$, $q(t)$ is still concave (i.e., $q''(t) \le 0$). Then
$R_k^j\leq0$ and
(\ref{Lem-3}) follows immediately from the above estimate.

If $m>1/\alpha$, by \refe{app-err}, we have
\begin{eqnarray}
\label{recurs-21}
R_k^j
&=&\,\int_{k-1}^k\frac{(j-s)^{-\alpha}}{\Gamma(1-\alpha)}
\int_{k-1}^{k}(q'(s)-q'(\mu))d\mu d s\nonumber\\
&=&\int_{k-1}^k\frac{(j-s)^{-\alpha}}{\Gamma(1-\alpha)}\int_{k-1}^{k}
\int_{\mu}^{s}q''(\eta)d\eta d\mu ds
\nonumber\\
&\leq&\,\int_{k-1}^k\frac{(j-s)^{-\alpha}}{\Gamma(1-\alpha)}\int_{k-1}^{k}\int_{\mu}^{k}
\frac{d\eta^{m\alpha-1}}{\Gamma(m\alpha)}d\mu ds\nonumber\\
&=&a_{j-k}\int_{k-1}^{k}\frac{k^{m\alpha-1}-\mu^{m\alpha-1}}{\Gamma(2-\alpha)
\Gamma(m\alpha)}d\mu,\quad 1\leq k\leq j.
\end{eqnarray}
Therefore, by applying \eqref{Lem-1} for $ n\geq1$, we have
\begin{eqnarray}\label{recurs-22}
\Gamma(2-\alpha)\sum_{j=1}^{n-1}p_{n-j}\sum_{k=1}^jR_k^j
&\leq&\sum_{j=1}^{n-1}p_{n-j}\sum_{k=1}^ja_{j-k}\int_{k-1}^{k}
\frac{k^{m\alpha-1}-\mu^{m\alpha-1}}{\Gamma(m\alpha)}d\mu\nonumber\\
&=&\sum_{k=1}^{n-1}\int_{k-1}^{k}\frac{k^{m\alpha-1}-\mu^{m\alpha-1}}{\Gamma(m\alpha)}d\mu\sum_{j=k}^{n-1}p_{n-j}a_{j-k}\nonumber\\
&\leq&\sum_{k=1}^{n-1}\frac{k^{m\alpha-1}}{\Gamma(m\alpha)}
-\frac{(n-1)^{m\alpha}}{\Gamma(1+m\alpha)}\nonumber\\
&\leq&\frac{n^{m\alpha}}{\Gamma(1+m\alpha)}
-\frac{(n-1)^{m\alpha}}{\Gamma(1+m\alpha)}.
\end{eqnarray}
Substituting \eqref{recurs-22} into \eqref{recurs-201}, the proof of (\ref{Lem-3}) is complete.
\end{proof}

\begin{lemma}\label{lemma:2}
Let $\overrightarrow{e}=(1,1,\cdots,1)^T \in R^n$ and
\begin{equation}\label{matr-0}
 J=2\Gamma(2-\alpha)\lambda\tau^\alpha\left[\begin{matrix}
    0 &~ p_1   &~\cdots &p_{n-2}  & p_{n-1} \\
    0&~  0    &~\cdots& ~p_{n-3}& ~p_{n-2}\\
    \vdots&~ \vdots    &~ \ddots & ~\vdots& ~\vdots\\
    0 & ~0  &~\cdots&~ 0&~p_1 \\
    0& ~0& ~\cdots& ~0& ~0\\
\end{matrix}\right]_{n\times n}.
\end{equation}
Then, it holds that
\begin{itemize}
\item [{(i)}] $J^i=0,~~i\geq n$;
\item [{(ii)}]
$J^m\overrightarrow{e} \leq \frac{1}{\Gamma(1+m\alpha)}
\Big( (2\lambda t_n^\alpha)^m, (2\lambda t_{n-1}^\alpha)^m,
\cdots,(2\lambda t_1^\alpha)^m\Big)^T$,~~ $m=0,1,2,\cdots$;
\item [{(iii)}]
$\sum\limits_{j=0}^iJ^j\overrightarrow{e} = \sum\limits_{j=0}^{n-1}J^{j}\overrightarrow{e}
\leq\Big( E_\alpha(2\lambda t_n^\alpha), E_\alpha(2\lambda t_{n-1}^\alpha) ,
\cdots,E_\alpha(2\lambda t_1^\alpha)\Big)^T$,~~ $i \geq n$.
\end{itemize}
\end{lemma}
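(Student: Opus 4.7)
Part (i) is immediate: $J$ is strictly upper triangular, hence nilpotent with $J^n=0$, so $J^i=0$ for every $i\geq n$. The plan for the remainder is to prove part (ii) by induction on $m$, using Lemma \ref{lemma:recursionCoefficient}(iii) as the crucial ingredient, and then deduce part (iii) by summing (ii) and recognising the Mittag-Leffler series.

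For part (ii), I would write the entries of $J$ as $J_{ik}=c\,p_{k-i}$ for $k>i$ and $0$ otherwise, where $c=2\Gamma(2-\alpha)\lambda\tau^\alpha$. The claim to prove is
\begin{equation*}
(J^m\vec{e})_i\;\leq\;\frac{(2\lambda t_{n-i+1}^\alpha)^m}{\Gamma(1+m\alpha)},\qquad 1\leq i\leq n,\ m\geq 0.
\end{equation*}
The base case $m=0$ is trivial since $J^0\vec{e}=\vec{e}$. For the inductive step, I expand
\begin{equation*}
(J^{m+1}\vec{e})_i=c\sum_{k=i+1}^{n}p_{k-i}(J^m\vec{e})_k,
\end{equation*}
insert the inductive bound, and change variables via $j=n-k+1$. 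Setting $N:=n-i+1$, so that $k-i=N-j$, the inner sum transforms into $\sum_{j=1}^{N-1}p_{N-j}\,j^{m\alpha}$, which is exactly the object controlled by Lemma \ref{lemma:recursionCoefficient}(iii) with $n\leftarrow N$ and $m\leftarrow m+1$. Plugging in the resulting bound and simplifying, the factor $\Gamma(2-\alpha)$ inside $c$ cancels the $\Gamma(2-\alpha)$ in the denominator, and the factor $\Gamma(1+m\alpha)$ also cancels, so the right-hand side collapses to $(2\lambda)^{m+1}\tau^{(m+1)\alpha}N^{(m+1)\alpha}/\Gamma(1+(m+1)\alpha)=(2\lambda t_{n-i+1}^\alpha)^{m+1}/\Gamma(1+(m+1)\alpha)$, closing the induction.

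Part (iii) then follows by summing the entrywise inequality of (ii) over $0\leq j\leq n-1$ and dominating the partial sum by the full series,
\begin{equation*}
\sum_{j=0}^{n-1}\frac{(2\lambda t_{n-i+1}^\alpha)^j}{\Gamma(1+j\alpha)}\;\leq\;E_\alpha(2\lambda t_{n-i+1}^\alpha),
\end{equation*}
while the first equality in (iii) is guaranteed by part (i). The only mildly delicate point in the whole argument is the index bookkeeping at the inductive step, specifically verifying that the substitution $j=n-k+1$ converts $\sum_{k=i+1}^{n}p_{k-i}(n-k+1)^{m\alpha}$ into exactly the form appearing in Lemma \ref{lemma:recursionCoefficient}(iii); once the parameter $N=n-i+1$ is introduced this becomes direct matching, but one must be careful with off-by-one issues since $p_0$ does not appear and the upper summation limit is $N-1$ rather than $N$.
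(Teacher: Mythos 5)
Your proposal is correct and follows essentially the same route as the paper: part (i) via strict upper triangularity, part (ii) by induction on $m$ with the entrywise bound pushed through the nonnegative matrix $J$ and the inner sum $\sum_{j=1}^{N-1}p_{N-j}j^{m\alpha}$ ($N=n-i+1$) controlled by Lemma \ref{lemma:recursionCoefficient}(iii), and part (iii) by summing and dominating by the Mittag--Leffler series. Your index bookkeeping via the substitution $j=n-k+1$ is exactly the reduction the paper performs row by row (and is in fact slightly cleaner, since the paper's displayed second entry contains a harmless index typo).
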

\begin{proof}
 Noting that $J$ is an upper triangular matrix, it is easy to check that (i) holds.

To prove (ii), we apply the mathematical induction.
It is obvious that (ii) holds for $m=0$. We assume that (ii) holds for $m=k$.
Since $t_n=n \tau$ and \refe{matr-0}, we have
\begin{eqnarray}
J^{k+1}\overrightarrow{e}&=& J J^{k}\overrightarrow{e} \leq \frac{1}{ \Gamma(1+k\alpha)} J \Big( (2\lambda t_n^\alpha)^k,
(2\lambda t_{n-1}^\alpha)^k,
\cdots,(2\lambda t_1^\alpha)^k\Big)^T\\
&=&
\frac{\Gamma(2-\alpha)(2\lambda\tau^\alpha)^{k+1}  }{\Gamma(1+k\alpha)}
\Big(  \sum_{j=1}^{n-1}p_{n-j}j^{k\alpha},  \sum_{j=1}^{n-2}p_{n-1-j} (j-1)^{k\alpha},
\cdots, p_11^{k\alpha}, 0\Big )^T.
\end{eqnarray}
By using \eqref{Lem-3} in Lemma \ref{lemma:recursionCoefficient}, we further have
\begin{eqnarray}
J^{k+1}\overrightarrow{e}
&\leq&\frac{ (2\lambda\tau^\alpha)^{k+1}  }{\Gamma(1+(k+1)\alpha)}
\Big( n^{(k+1)\alpha},  (n-1)^{(k+1)\alpha},\cdots, 2^{(k+1)\alpha},    1^{(k+1)\alpha}\Big )^T
\nonumber\\
&=& \frac{1}{\Gamma(1+(k+1)\alpha)}
\Big( (2\lambda t_n^\alpha)^{k+1}, (2\lambda t_{n-1}^\alpha)^{k+1},
\cdots,(2\lambda t_1^\alpha)^{k+1}\Big)^T .
\end{eqnarray}
Thus (ii) holds for $m=k+1$.

Since (i) implies that
$\sum_{j=0}^iJ^j\overrightarrow{e}
 = \sum_{j=0}^{n-1}J^{j}\overrightarrow{e}$ for $i\geq n$, and by (ii), we have
\begin{eqnarray}
\sum_{j=0}^{n-1}J^{j}\overrightarrow{e}
&\leq& \sum_{j=0}^{n-1} \frac{1}{\Gamma(1+j\alpha)}
\Big( (2\lambda t_n^\alpha)^{j}, (2\lambda t_{n-1}^\alpha)^{j},
\cdots,(2\lambda t_1^\alpha)^{j}\Big)^T \nonumber\\
&\leq & \Big( E_\alpha(2\lambda t_n^\alpha), E_\alpha(2\lambda t_{n-1}^\alpha) ,
\cdots,E_\alpha(2\lambda t_1^\alpha)\Big)^T.
\end{eqnarray}
The proof of Lemma \ref{lemma:2} is complete.
\end{proof}

\vspace{0.1in}
We now turn back to the proof of Lemma \ref{lemma:Gronwall-uniformInitial}.

By the definition of $L1$-approximation \refe{l1}, we get
\begin{equation} \label{GWN}
\sum_{k=1}^ja_{j-k}{\delta_t\omega^k}\leq \Gamma(2-\alpha)\tau^{\alpha}
(\lambda_1\omega^j+\lambda_2\omega^{j-1})
+\Gamma(2-\alpha)\tau^{\alpha}g^j.
\end{equation}
Multiplying the inequality \eqref{GWN} by $p_{n-j}$ and summing over
for $j$ from $1$ to $n$, we have
\begin{align*}
\sum_{j=1}^np_{n\!-\!j}\sum_{k=1}^ja_{j\!-\!k}{\delta_t\omega^k}\!
\leq \! \Gamma(2\!-\!\alpha)\tau^{\alpha}\sum_{j=1}^np_{n\!-\!j}(\lambda_1\omega^j+\lambda_2\omega^{j\!-\!1})
+\Gamma(2\!-\!\alpha)\tau^{\alpha}\sum_{j=1}^{n}p_{n\!-\!j}g^j.
\end{align*}
By using the results \eqref{Lem-1} and \eqref{Lem-2} in Lemma \ref{lemma:recursionCoefficient},
we obtain
\begin{align*}
\sum_{j=1}^np_{n-j}\sum_{k=1}^ja_{j-k}{\delta_t\omega^k}=\sum_{k=1}^n\delta_t\omega^k\sum_{j=k}^np_{n-j}a_{j-k}
=\sum_{k=1}^n\delta_t\omega^k=\omega^n-\omega^0, \quad n\geq1,
\end{align*}
and
\begin{align*}
\Gamma(2-\alpha)\tau^{\alpha}\sum_{j=1}^{n}p_{n-j}g^j
\leq \Gamma(2-\alpha)\tau^{\alpha}\max_{1\leq j\leq n}g^j
\sum_{j=1}^{n}p_{n-j}\leq \frac{t_n^{\alpha}}{\Gamma(1+\alpha)}
\max_{1\leq j\leq n}g^j,\quad n\geq1.
\end{align*}
It follows that
\begin{align*}
\omega^n\leq \Psi_n+\Gamma(2-\alpha)\tau^{\alpha}\sum_{j=1}^{n}p_{n-j}(\lambda_1\omega^j+\lambda_2\omega^{j-1})~~,\quad
n\geq1,
\end{align*}
where
\[
\Psi_n : =\omega^0+\frac{t_n^{\alpha}}{\Gamma(1+\alpha)}\max_{1\leq j\leq n}g^j.
\]
By noting that $\Psi_n\geq \Psi_k$ for $n\geq k\geq1$, we get
\begin{align}\label{ImmediateInequality2-uniformInitial}
\omega^n\leq 2\Psi_n+2\Gamma(2-\alpha)\Big(\lambda_1\tau^{\alpha}\sum_{j=1}^{n-1}p_{n-j}\omega^j+\lambda_2\tau^{\alpha}\sum_{j=1}^{n}
p_{n-j}\omega^{j-1}\Big)~~,\quad
n\geq1,
\end{align}
when $\tau\leq \sqrt[\alpha]{\frac{1}{2\Gamma(2-\alpha)\lambda_1}}$.

Let $V=(\omega^n,\omega^{n-1},\cdots,\omega^1)^T$.
Thus \eqref{ImmediateInequality2-uniformInitial} can be written in a vector form by
\begin{equation}\label{matr-1}
V \leq(\lambda_1J_1+\lambda_2J_2) V + 2\Psi_n\overrightarrow{e},
\end{equation}
where
\begin{equation*}
 J_1= 2\Gamma(2-\alpha)\tau^\alpha\left[\begin{matrix}
    0 &~ p_1   &~\cdots &p_{n-2}  & p_{n-1} \\
    0&~  0    &~\cdots& ~p_{n-3}& ~p_{n-2}\\
    \vdots&~ \vdots    &~ \ddots & ~\vdots& ~\vdots\\
    0 & ~0  &~\cdots&~ 0&~p_1 \\
    0& ~0& ~\cdots& ~0& ~0\\
\end{matrix}\right]_{n\times n},~~
\end{equation*}
and
\begin{equation*}
 J_2=2\Gamma(2-\alpha)\tau^\alpha\left[\begin{matrix}
    0 &~ p_0   &~\cdots &p_{n-3}  & p_{n-2} \\
    0&~  0    &~\cdots& ~p_{n-4}& ~p_{n-3}\\
    \vdots&~ \vdots    &~ \ddots & ~\vdots& ~\vdots\\
    0 & ~0  &~\cdots&~ 0&~p_0 \\
    0& ~0& ~\cdots& ~0& ~0\\
\end{matrix}\right]_{n\times n}.
\end{equation*}
By \refe{p}, we have
\begin{equation*}
p_i \leq \frac{1}{a_0-a_1}p_{i+1} = \frac{1}{ 2-2^{1-\alpha} } p_{i+1},~~ i\geq 0.
\end{equation*}
Therefore,
\begin{equation}\label{matr-2}
J_2V \leq \frac{1}{ 2-2^{1-\alpha}}J_1V.
\end{equation}
Substituting \eqref{matr-2} into \eqref{matr-1},  we get
\begin{equation}\label{mart-3}
V \leq J V + 2\Psi_n\overrightarrow{e},
\end{equation}
where $J$ is defined in \eqref{matr-0} with $\lambda=\lambda_1+\frac{\lambda_2}{2-2^{1-\alpha}} $.

As a result, we see that
\begin{align}
\label{gron-ineq}
 V &\leq J V + 2\Psi_n\overrightarrow{e}
\nonumber \\
& \leq J(JV+2\Psi_n\overrightarrow{e})+2\Psi_n\overrightarrow{e}\nonumber \\
&=J^2V
+2\Psi_n\sum_{j=0}^1J^j\overrightarrow{e}
\nonumber\\
 &\leq\cdots \nonumber \\
 &\leq J^nV+ 2\Psi_n\sum_{j=0}^{n-1}J^j\overrightarrow{e}.
 \end{align}
By using (i) and (iii) in Lemma \ref{lemma:2}, we obtain \eqref{gron-101} and complete the proof of Lemma \ref{lemma:Gronwall-uniformInitial}.
\quad \endproof

\section{Numerical examples}\label{sec4}
\setcounter{equation}{0}
In this section, we present three numerical examples which
substantiate the analysis given earlier for schemes \eqref{s1}, \eqref{s2}
and \eqref{s3}. The orders of convergence are
examined. The exact solutions
of equations in the first two examples are smooth and the computations
are performed by using the software FreeFEM++.  The exact solution of the equation in last example has an initial singularity and the computation is performed by using Matlab.

\vspace{0.1in}
{\bf Example 1.} We first consider the two-dimensional time-fractional Huxley equation
\begin{eqnarray}\label{exam-2}
 & ~_0^C\!\mathcal{D}_t^\alpha u= \Delta u + u(1-u)(u-1) + g_1,
\qquad x \in [0,1]\times[0,1],~~
0 < t \le 1.
\end{eqnarray}
The equation \eqref{exam-2} can describe many different physical models, such as population genetics
in circuit theory and the transmission of nerve impulses
\cite{merd12,li-zhang16}. To obtain a simple benchmark
solution, we can calculate the function $g_1$ based on the exact solution
\[
 u = (1+t^3)(1-x)\sin(x)(1-y)\sin(y).
\]

\begin{table}[!ht]
\begin{center}
\caption{ $L^2$-errors $\|u^N-U_h^N\|_{L^2}$ and convergence rates in temporal direction  for Eq. \eqref{exam-2}}
\begin{tabular}{llllllllll}
\hline
& &\multicolumn{2}{c}{$\alpha=0.25$}&{}&\multicolumn{2}{c}{$\alpha=0.5$}&{}&\multicolumn{2}{c}{$\alpha=0.75$}\\
\cline{3-4}\cline{6-7}\cline{9-10}
&{$N$}&$\mbox{error}$&$\mbox{order}$&{}&$\mbox{error}$&$\mbox{order}$&{}&$\mbox{error}$&$\mbox{order}$\\
\hline
                  &$10$        &2.81E-4   &--      &{}    &3.19E-4      &--        &{}    &4.20E-4   &--\\
Scheme \eqref{s1} &$20$        &1.43E-4   &0.96   &{}    &1.57E-4      &1.02     &{}    &2.04E-4   &1.04\\
                  &$40$        &7.20E-5   &0.99   &{}    &7.72E-5      &1.02     &{}    &9.95E-5   &1.05\\
                  &$80$        &3.60E-5   &1.00   &{}    &3.79E-5      &1.02     &{}    &4.73E-5   &1.05\\
\hline
                  &$10$        &6.42E-6   &--      &{}    &1.06E-4      &--        &{}    &1.50E-4   &--\\
Scheme \eqref{s2} &$20$        &2.46E-6   &1.38   &{}    &3.37E-5      &1.37     &{}    &6.59E-5   &1.18\\
                  &$40$        &8.99E-7   &1.45   &{}    &6.75E-6      &1.41     &{}    &2.85E-5   &1.21\\
                  &$80$        &3.17E-7   &1.53   &{}    &2.49E-6      &1.44     &{}    &1.22E-5   &1.23\\
\hline
                  &$10$        &6.62E-5   &--      &{}    &1.06E-4      &--        &{}    &2.09E-4   &--\\
Scheme \eqref{s3} &$20$        &1.83E-5   &1.85   &{}    &3.37E-5      &1.65     &{}    &8.17E-5   &1.36\\
                  &$40$        &4.97E-6   &1.88   &{}    &1.08E-5      &1.64     &{}    &3.25E-5   &1.32\\
                  &$80$        &1.35E-6   &1.88   &{}    &3.53E-6      &1.62     &{}    &1.32E-5   &1.30\\
\hline
\end{tabular}\label{table1}
\end{center}
\end{table}

\begin{table}[!ht]
\begin{center}
\caption{$L^2$-errors $\|u^N-U_h^N\|_{L^2}$ and convergence rates in spatial  direction for Eq. \eqref{exam-2}}
\begin{tabular}{llllllllll}
\hline
 &\multicolumn{2}{c}{$\mbox{L-FEM}$}& &{}&\multicolumn{2}{c}{$\mbox{Q-FEM}$}&{}\\
\cline{2-3}\cline{6-7}
{$M$}&$\mbox{error}$&$\mbox{order}$& &{}&$\mbox{error}$&$\mbox{order}$&{}\\
\hline
$5$          &6.16E-3   &--         &{}  &   &2.08E-4      &--        &{}    \\
$10$         &1.57E-3   &1.97      &{}  &   &2.61E-5      &2.99        &{}    \\
$20$         &3.96E-4   &1.99      &{}  &   &3.26E-6     &3.01     &{}    \\
$40$         &9.91E-5   &2.00      &{}  &   &4.08E-7      &3.00     &{}    \\
\hline
\end{tabular}\label{table2}
\end{center}
\end{table}
We apply the linearized schemes \eqref{s1}, \refe{s2} and \eqref{s3}
to solve problem \eqref{exam-2} with linear and quadratic
finite element approximations, respectively. Here and below,
a uniform triangular partition with $M+1$ nodes in each spatial direction
is used. To investigate the temporal convergence order, we
use a quadratic FEM with a fixed spatial meshsize $h=1/100$
and several refined temporal meshes $\tau$.
Table \ref{table1} shows the $L^2$-errors at time $T=1$ and
convergence rates in temporal direction with different $\alpha$.
From Table \ref{table1}, one can see that the numerical schemes \eqref{s2}
and \eqref{s3} have an accuracy
of order $2\!-\!\alpha$, while numerical scheme \eqref{s1} has an
accuracy of order $1$.

To investigate spatial convergence order, we
apply the scheme \eqref{s1} to
solve equation \eqref{exam-2}
using both linear and quadratic FEMs with several refined spatial meshes $h$. Table \ref{table2} shows
the $L^2$-errors and convergence rates with $\alpha=0.25$ and $N=M^3$.
The results in Table \ref{table2} indicate that
the scheme \eqref{s1}
is of optimal convergence order ${r+1}$ in spatial direction.

\vspace{0.1in}
{\bf Example 2.}
Secondly, we consider the three-dimensional time-fractional Fisher equation
\begin{eqnarray}\label{exam-3}
&& ~_0^C\!\mathcal{D}_t^\alpha u= \Delta u + u(1-u) + g_2, \qquad x \in [0,1]\times[0,1]\times[0,1],\
0 < t \le 1.
\end{eqnarray}
The equation \eqref{exam-3} was originally proposed to describe the spatial and temporal
propagation of a virile gene. Later, it is revised by providing some
characteristics of memory embedded into the
system \cite{alq15,li-zhang16}. To get a benchmark solution,
we calculate the right-hand side $g_2$ of \eqref{exam-3}
based on the exact solution
\[ u = t^2\sin(\pi x)\sin(\pi y)\sin(\pi z). \]

We apply all three proposed schemes with quadratic FEMs  to solve the equation \eqref{exam-3} by taking
$M=60$ and several refined temporal meshes.
Table \ref{table3} shows the $L^2$-errors at time $T=1$ and
convergence rates in temporal direction with different $\alpha$.
Table \ref{table4} shows $L^2$-errors
at time $T=1$ and convergence rates in spatial direction for the scheme \eqref{s1} with $\alpha=0.25$ and $N=M^3$.
Again, the results in Tables \ref{table3} and \ref{table4} confirm our theoretical analysis.

\begin{table}[!ht]
\begin{center}
\caption{ $L^2$-errors $\|u^N-U_h^N\|_{L^2}$ and convergence rates in temporal direction  for Eq. \eqref{exam-3}}
\begin{tabular}{llllllllll}
\hline
& &\multicolumn{2}{c}{$\alpha=0.25$}&{}&\multicolumn{2}{c}{$\alpha=0.5$}&{}&\multicolumn{2}{c}{$\alpha=0.75$}\\
\cline{3-4}\cline{6-7}\cline{9-10}
&{$N$}&$\mbox{error}$&$\mbox{order}$&{}&$\mbox{error}$&$\mbox{order}$&{}&$\mbox{error}$&$\mbox{order}$\\
\hline

                  &$5$         &3.48E-4   &--      &{}    &4.16E-4      &--        &{}    &6.50E-4   &--\\
Scheme \eqref{s1} &$10$         &2.24E-4   &0.64   &{}    &2.47E-4      &0.75     &{}    &3.51E-4   &0.89\\
                  &$20$         &1.25E-4   &0.84   &{}    &1.32E-4      &0.90     &{}    &1.76E-4   &0.99\\
                  &$40$         &6.61E-5   &0.92      &{}    &6.75E-5      &0.97     &{}    &8.65E-5   &1.02\\
\hline
                  &$5$         &1.05E-3   &--      &{}    &1.34E-3      &--        &{}    &1.98E-3   &--\\
Scheme \eqref{s2} &$10$        &2.96E-4   &1.82   &{}    &4.11E-4      &1.70     &{}    &7.09E-4   &1.48\\
                  &$20$        &7.99E-5   &1.88   &{}    &1.24E-4      &1.72     &{}    &2.60E-4   &1.45\\
                  &$40$        &2.15E-5   &1.89   &{}    &3.77E-5      &1.72     &{}    &9.89E-5   &1.39\\
\hline
                  &$5$         &3.04E-4   &--      &{}    &5.85E-4      &--        &{}    &1.21E-3   &--\\
Scheme \eqref{s3} &$10$        &9.26E-5   &1.72   &{}    &2.04E-4      &1.52     &{}    &4.99E-4   &1.28\\
                  &$20$        &2.65E-5   &1.80   &{}    &6.91E-5      &1.56     &{}    &2.05E-4   &1.28\\
                  &$40$        &7.86E-6   &1.74   &{}    &2.39E-5      &1.53     &{}    &8.46E-5   &1.27\\
\hline
\end{tabular}\label{table3}
\end{center}
\end{table}

\begin{table}[!ht]
\begin{center}
\caption{$L^2$-errors $\|u^N-U_h^N\|_{L^2}$ and convergence rates in spatial  direction  for Eq. \eqref{exam-3}}
\begin{tabular}{llllllllll}
\hline
 &\multicolumn{2}{c}{$\mbox{L-FEM}$}& &{}&\multicolumn{2}{c}{$\mbox{Q-FEM}$}&{}\\
\cline{2-3}\cline{6-7}
{$M$}&$\mbox{error}$&$\mbox{order}$& &{}&$\mbox{error}$&$\mbox{order}$&{}\\
\hline
$5$          &5.73E-2   &--      &{} &   &2.63E-3      &--        &{}    \\
$10$         &1.54E-2   &1.90   &{} &   &3.27E-4      &3.01     &{}    \\
$20$         &3.91E-3   &1.97   &{} &   &4.09E-5      &3.00     &{}    \\
$40$         &9.86E-4   &1.99   &{} &   &5.11E-6      &3.00     &{}    \\
\hline
\end{tabular}\label{table4}
\end{center}
\end{table}

\vspace{0.1in}
{\bf Example 3.} We finally consider the time-fractional
Fokker-Planck equation
\begin{eqnarray}\label{exam-FP}
&& ~_0^C\!\mathcal{D}_t^\alpha u= u_{xx} + \frac{\phi'(x)}{\eta_\alpha}u_x +\frac{\phi''(x)}{\eta_\alpha}u + g_3, \qquad x \in [0,\pi],~~
0 < t \le 1.
\end{eqnarray}
The model describes the time evolution of the probability density
function of position and velocity of a particle \cite{bar00,den07}.
Here $u$ is the probability density, $\phi$ indicates the
potential of overdamped Brownian motion, $\eta_\alpha$ is
the generalized friction coefficient. We set $\phi(x)=\exp(x)$,
$\eta_\alpha=1$, calculate the function $g_3$ based on the exact solution
\[ u = (t^\alpha+t^2)\sin(x). \]

The exact solution $u$ has an initial layer at $t=0$ since
the derivative of the solution, i.e., $u_t(x,t)$, blows up as $t\rightarrow 0+$. Clearly, the solution does not have the
requisite regularity.  We solve the linear equation \eqref{exam-FP}
by the proposed three schemes with linear finite element
approximation on uniform meshes. We set $h=10^{-4}$ and investigate the temporal convergence order by refining the temporal mesh $\tau$.
The errors {$\max_{1\leq n \leq N} \|u^n-U^n_h\|_{L^2}$} and convergence rates in the temporal direction with different $\alpha$
are listed in Table \ref{table5}.
The results in Table \ref{table5} indicate that schemes
\eqref{s1}, \eqref{s2} and \eqref{s3} are convergent, but the convergence rate is not of order $1$ or $2-\alpha$ in the temporal direction any more. These results agree with the theoretical
result shown in Remark 1.
\begin{table}[!ht]
\begin{center}
\caption{The errors $\max\limits_{1\leq n \leq N} \|u^n-U^n_h\|_{L^2}$ and convergence rates in temporal direction for Eq. \eqref{exam-FP}}
\begin{tabular}{llllllllll}
\hline
& &\multicolumn{2}{c}{$\alpha=0.4$}&{}&\multicolumn{2}{c}{$\alpha=0.6$}&{}&\multicolumn{2}{c}{$\alpha=0.8$}\\
\cline{3-4}\cline{6-7}\cline{9-10}
&{$N$}&$\mbox{error}$&$\mbox{order}$&{}&$\mbox{error}$&$\mbox{order}$&{}&$\mbox{error}$&$\mbox{order}$\\
\hline
                  &$50$         &1.91E-1   &--      &{}    &2.08E-1      &--        &{}    &2.21E-1   &--\\
                  &$100$        &1.13E-1   &0.75   &{}    &1.06E-1      &0.97     &{}    &1.13E-1   &0.96\\
Scheme \eqref{s1} &$200$        &7.63E-2   &0.58   &{}    &5.36E-2      &0.98     &{}    &5.73E-2   &0.98\\
                  &$400$        &5.07E-2   &0.58   &{}    &2.69E-2      &0.99     &{}    &2.89E-2   &0.99\\
                  &$800$        &3.36E-2   &0.59  &{}     &1.35E-2      &0.99     &{}    &1.45E-2   &0.99\\
\hline
                  &$50$         &4.57E-2   &--      &{}    &2.21E-2      &--        &{}    &7.57E-3   &--\\
                  &$100$        &3.59E-2   &0.35   &{}    &1.47E-2      &0.59     &{}    &4.59E-3   &0.72\\
Scheme \eqref{s2} &$200$        &2.78E-2   &0.37   &{}    &9.55E-3      &0.62     &{}    &2.67E-3   &0.78\\
                  &$400$        &2.13E-2   &0.39   &{}    &6.17E-3      &0.63     &{}    &1.50E-3   &0.83\\
                  &$800$        &1.61E-2   &0.40   &{}    &3.98E-3      &0.63     &{}    &8.25E-4   &0.86\\
\hline
                  &$50$         &1.38E-1   &--      &{}    &6.48E-2      &--        &{}    &3.53E-2   &--\\
                  &$100$        &1.06E-1   &0.38   &{}    &4.17E-2      &0.64     &{}    &1.93E-2   &0.87\\
Scheme \eqref{s3} &$200$        &8.07E-2   &0.39   &{}    &2.67E-2      &0.64     &{}    &1.07E-2   &0.85\\
                  &$400$        &6.08E-2   &0.41   &{}    &1.71E-2      &0.64     &{}    &6.04E-3   &0.83\\
                  &$800$        &4.56E-2   &0.41   &{}    &1.11E-2      &0.63     &{}    &3.43E-3   &0.82\\
\hline
\end{tabular}\label{table5}
\end{center}
\end{table}

\section{Conclusions}\label{sec5}
Several linearized $L1$-Galerkin FEMs have been proposed for
solving time-fractional nonlinear parabolic PDEs \eqref{i1}
to avoid the iterations at each time step.
Error estimates in previous literatures were
generally obtained only in a small (local) time interval or
in the case that the evolution of
the numerical solution decreases in time. Clearly, it
limits the applications of L1-type methods. In this paper,
we establish a fundamental Gronwall type inequality for
$L1$ approximation to the Caputo fractional derivative,
and provide theoretical analysis to derive the corresponding
optimal error
estimates without the restrictions required in previous works.
A broad range of numerical examples are given to illustrate our
theoretical results.\\

\section*{Acknowledgements}

The research was supported by NSFC under grants 11571128, 91430216 and U1530401, 11372354
 a grant CityU 11302915 from the Research Grants
Council of the Hong Kong Special Administrative Region, and a grant DRA2015518 from 333 High-level Personal Training Project of Jiangsu Province.

\end{document}